\documentclass[11pt]{article}

\usepackage[utf8]{inputenc}
\usepackage{amsmath,amssymb,amsthm}
\usepackage[margin=1in]{geometry}
\usepackage{graphicx}
\usepackage{hyperref}
\usepackage{pgfplots}
\usepgfplotslibrary{groupplots}
\pgfplotsset{compat=1.18}

\newtheorem{theorem}{Theorem}
\newtheorem{proposition}[theorem]{Proposition}
\newtheorem{assumption}[theorem]{Assumption}
\newtheorem{corollary}[theorem]{Corollary}
\theoremstyle{remark}
\newtheorem{remark}[theorem]{Remark}

\newcommand{\R}{\mathbb{R}}
\newcommand{\cA}{\mathcal{A}}
\newcommand{\cZ}{\mathcal{Z}}
\newcommand{\dist}{\operatorname{dist}}
\newcommand{\Id}{\operatorname{Id}}
\DeclareMathOperator{\diag}{diag}
\DeclareMathOperator{\arctanh}{arctanh}
\title{Global attractors and fast-slow reduction for\\
finite-state actor-critic mean dynamics}
\author{Vladyslav Prytula\\
zooplus SE\\
\texttt{vladyslav.prytula@proton.me}}
\date{}

\begin{document}
\maketitle

\begin{abstract}
When a learning algorithm reshapes the data distribution it trains on, the
long-run behavior depends on the joint evolution of the policy, the value
estimate, and the data distribution.
We study finite-state actor-critic mean dynamics on the enlarged phase space
$(\theta,w,\mu)$, where $\theta$ is the actor parameter, $w$ is an auxiliary
critic state, and $\mu$ is a state-law variable (the distribution over states
induced by the current policy). The state-law coordinate follows the exact
controlled-Markov equation $\delta \dot\mu = Q_\theta^*\mu$. Under a softmax
actor with box confinement (a smooth proxy for parameter clipping), a uniformly
coercive linear critic equation, and a Lipschitz generator family
$\theta \mapsto Q_\theta$, we prove that for each $\delta>0$ the resulting
autonomous semiflow possesses a compact global attractor. Under a uniform
exponential-mixing assumption, we prove that the invariant-law map
$\theta \mapsto \mu_\theta$ is Lipschitz and that the reduced invariant-law
system on $(\theta,w)$ is well posed. Under an additional pathwise
exponential-stability estimate for the non-autonomous fast state equation, we
show that the exact flow tracks the reduced flow on every finite time interval
up to the initial layer, and that the exact attractors converge upper
semicontinuously to the lifted reduced attractor as $\delta \to 0$. We also
give a concrete finite-state reference-state minorization condition implying
the pathwise hypothesis.  All results are formalized in Lean~4 without custom
axioms.
\end{abstract}

\section{Introduction}

In a recommendation system, the algorithm selects items based on a learned
policy, users respond according to the items they see, and the resulting
interaction data feeds back into the next policy update.  The policy, the value
estimate that guides it, and the distribution of user states all evolve
together: the policy reshapes the population, and the reshaped population
changes the gradient the learner sees.  This feedback loop---termed
performative in the recent literature \cite{PZMH20}---appears equally in search
ranking, adaptive routing, and other settings where a learning agent's
decisions influence the data it trains on.

In actor-critic models, the deterministic mean dynamics of the joint updates
therefore depend on the actor parameter $\theta$ (encoding the policy), the
critic state $w$ (a value-function estimate), and the state distribution $\mu$
(the population the current policy has shaped).  Most actor-critic analyses
reduce the long-run behavior to a policy ODE on $\theta$ alone and treat the
data distribution as a derived quantity, either by replacing it with a
prescribed relaxation map or by assuming it has already equilibrated.  Here we
keep all three as co-evolving state variables on an enlarged autonomous phase
space.  The distribution obeys the forward equation of the controlled Markov
chain,
\[
\delta \dot\mu = Q_\theta^*\mu,
\]
while the actor and critic are driven by a softmax actor field and a uniformly
coercive linear critic equation.

We prove three results for a finite-state model with explicit actor
confinement, critic coercivity, and a Lipschitz generator family.  First, for
each $\delta>0$ the resulting autonomous semiflow on $(\theta,w,\mu)$ possesses
a compact global attractor---a fixed bounded set that eventually captures every
trajectory regardless of initialization.  Second, under a uniform
exponential-mixing hypothesis, the frozen invariant-law map
$\theta \mapsto \mu_\theta$ is Lipschitz, which identifies the natural reduced
system on $(\theta,w)$ obtained by inserting $\mu = \mu_\theta$ into the exact
dynamics.  Third, under an additional pathwise exponential-stability estimate
for the non-autonomous fast state equation, the exact flow tracks the reduced
flow on every finite time interval away from the initial layer, and the exact
attractors converge upper semicontinuously to the lifted reduced attractor as
$\delta \to 0$.  We also give a concrete reference-state minorization criterion
that implies the pathwise hypothesis.

The attractor is the appropriate asymptotic object here because it describes
all possible long-run behaviors of the coupled system at once---multiple
equilibria, connecting orbits, and any other recurrent structure---without
requiring the analyst to identify individual stable states or to rule out more
complex dynamics in advance.  Its compactness and invariance give uniform
bounds that hold for every bounded set of initial conditions, and upper
semicontinuity under the singular limit $\delta \to 0$ means that the long-run
picture of the full three-variable system remains close to that of the simpler
two-variable reduction whenever the distribution equilibrates quickly.  In
practice, Markov chain mixing times are typically much shorter than the
timescale on which the policy changes, which is the regime where this reduction
applies.

Classical stochastic-approximation and actor-critic analyses relate learning
iterates to limiting ODEs or differential inclusions
\cite{Ben96,BM,BorAC,KT03,LiuCZ25}.  On the controlled-process side,
regularity of invariant laws under perturbations of a kernel or generator is
also standard \cite{Mit,MT,Norris,Sch,Yuksel24,Truquet20}.  The present note
combines these ingredients in a stationary finite-state setting and asks for a
compact asymptotic object for all bounded initial data of the exact
enlarged-state flow.  The additional contribution is that the invariant-law
graph can be compared quantitatively with the exact flow: under a pathwise
contraction estimate for the non-autonomous fast equation, the full and reduced
dynamics stay close on finite time intervals, and the corresponding compact
attractors converge as $\delta \to 0$.  The entire theorem package has been
formalized in Lean~4 with no \texttt{sorry}, \texttt{admit}, or
\texttt{axiom}; the proof scripts are available at
\url{https://codeberg.org/VladP/actor-crit-in-recsys}.

\section{Model}

Fix integers $N,K,d,m \geq 1$ and write
\[
S := \{1,\dots,N\},
\qquad
A := \{1,\dots,K\}.
\]
Let $R_\theta,\tau,\lambda_c,\delta$ be positive constants, let
$r_{i,a} \in \R$, $\psi_{i,a} \in \R^d$, and $\varphi_{i,a} \in \R^m$ for
$(i,a) \in S \times A$, and set
\[
\Theta := [-R_\theta,R_\theta]^d,
\qquad
\Delta_S := \Bigl\{\mu \in [0,\infty)^N : \sum_{i=1}^N \mu_i = 1\Bigr\}.
\]

The actor policy is the softmax law
\[
\pi_\theta(a \mid i)
:=
\frac{\exp(\psi_{i,a} \cdot \theta/\tau)}
{\sum_{b=1}^K \exp(\psi_{i,b} \cdot \theta/\tau)},
\]
and the current occupancy measure associated with $(\theta,\mu)$ is
\[
\nu_{\theta,\mu}(i,a) := \mu_i \pi_\theta(a \mid i).
\]

The functions $b$, $A$, and $\widetilde G$ arise from averaging the critic and
actor updates over the current occupancy measure $\nu_{\theta,\mu}$. The
vector $b(\theta,\mu)$ is the averaged reward-weighted feature,
$A(\theta,\mu)$ is the regularized feature covariance, and
$\widetilde G(\theta,w,\mu)$ is the softmax policy-gradient direction.

Define
\[
b(\theta,\mu)
:=
\sum_{i,a} \nu_{\theta,\mu}(i,a)\, r_{i,a}\, \varphi_{i,a},
\]
\[
A(\theta,\mu)
:=
\lambda_c I_m
+
\sum_{i,a} \nu_{\theta,\mu}(i,a)\, \varphi_{i,a}\varphi_{i,a}^\top,
\]
\[
\widetilde G(\theta,w,\mu)
:=
\sum_{i,a} \nu_{\theta,\mu}(i,a)
\bigl(r_{i,a} + \varphi_{i,a}\cdot w - \tau \log \pi_\theta(a \mid i)\bigr)
\nabla_\theta \log \pi_\theta(a \mid i),
\]
and
\[
D(\theta)
:=
\diag(R_\theta^2 - \theta_1^2,\dots,R_\theta^2 - \theta_d^2),
\qquad
G(\theta,w,\mu) := D(\theta)\widetilde G(\theta,w,\mu).
\]

The term $\lambda_c I_m$ is a ridge-type regularization of the critic and is
the source of the uniform coercivity used below. Without this term one would
need a separate feature-richness or persistent-excitation assumption to obtain
a uniform lower bound on the critic covariance.

The damping factor $D(\theta)$ confines the actor to the box $\Theta$. It
should be read as a smooth confinement surrogate for projected actor updates on
a compact parameter set. Strict projection would make the right-hand side
discontinuous, turning the actor equation into a Filippov differential
inclusion and invalidating both the Lipschitz semiflow property used in Theorem
\ref{thm:attractor} and the variation-of-constants argument in Theorem
\ref{thm:tracking}. Smooth confinement isolates the performative fast-slow
phenomena from sliding-mode boundary effects.

For the controlled-chain component, let $Q_\theta$ be a generator matrix on $S$ for each
$\theta \in \Theta$. We let $Q_\theta$ act on observables and $Q_\theta^*$
denote its transpose/adjoint acting on column probability vectors. We also
write
\[
\cZ := \Bigl\{\xi \in \R^N : \sum_{i=1}^N \xi_i = 0\Bigr\}.
\]
We use the $\ell^1$-norm on $\R^N$ for the state-law estimates.

\begin{assumption}\label{ass:gen}
For every $\theta \in \Theta$, the matrix $Q_\theta$ is a generator:
\[
q_{ij}(\theta) \geq 0 \quad \text{for } i \neq j,
\qquad
q_{ii}(\theta) = - \sum_{j \neq i} q_{ij}(\theta).
\]
The map $\theta \mapsto Q_\theta^*$ is Lipschitz from $\Theta$ into
$\mathcal{L}(\R^N,\R^N)$ in the induced $\ell^1$ operator norm: there exists
$L_Q > 0$ such that
\[
\|Q_\theta^* - Q_{\bar\theta}^*\|_{1 \to 1}
\leq
L_Q |\theta - \bar\theta|
\qquad
\text{for all } \theta,\bar\theta \in \Theta.
\]
\end{assumption}

Under Assumption \ref{ass:gen}, we consider the exact autonomous enlarged-state
system
\begin{equation}\label{eq:main}
\dot\theta = G(\theta,w,\mu),
\qquad
\dot w = b(\theta,\mu) - A(\theta,\mu)w,
\qquad
\delta \dot\mu = Q_\theta^* \mu
\end{equation}
The first equation updates the policy in a damped gradient direction, the
second is a linear regression for the critic with time-varying data, and the
third is the forward equation of the controlled chain on a fast timescale
$\delta^{-1}$. The state space is
\[
X := \Theta \times \R^m \times \Delta_S.
\]
We equip $X$ with the product metric
\[
d_X\bigl((\theta,w,\mu),(\bar\theta,\bar w,\bar\mu)\bigr)
:=
|\theta-\bar\theta| + |w-\bar w| + |\mu-\bar\mu|_1.
\]

\section{Global attractor}

We now show that the system \eqref{eq:main} is globally well posed and
dissipative. The strategy is classical: we first show that each component stays
in its natural domain (the actor parameters in the box $\Theta$, the
distribution on the probability simplex $\Delta_S$), then use the critic's
coercivity to prove that the critic state cannot escape to infinity. Together
these give a compact set $K$ that absorbs all trajectories in finite time.
Attractor existence then follows from standard finite-dimensional theory.

Set
\[
M_r := \max_{i,a} |r_{i,a}|,
\qquad
M_\varphi := \max_{i,a} |\varphi_{i,a}|,
\qquad
B_b := M_r M_\varphi,
\]
and define
\[
R_w := \max\Bigl\{1,\frac{2B_b}{\lambda_c}\Bigr\}.
\]

\begin{theorem}\label{thm:attractor}
Assume Assumption \ref{ass:gen}. For each $\delta>0$, system \eqref{eq:main}
generates a continuous semiflow $S_\delta(t)$ on $X$. The set
\[
K := \Theta \times \overline{B_{R_w}(0)} \times \Delta_S
\]
is compact, forward invariant, and absorbing for $S_\delta(t)$. Consequently,
$S_\delta(t)$ possesses a unique compact global attractor
$\cA_\delta = \omega(K)$.
\end{theorem}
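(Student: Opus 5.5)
The plan is to establish, in order, local Lipschitz regularity of the vector field, invariance of $\Theta$ and $\Delta_S$, an a priori bound on $w$, and then to invoke the standard existence theorem for attractors of asymptotically compact dissipative semiflows.

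\emph{Regularity.} First I would extend the right-hand side of \eqref{eq:main} to a neighbourhood of $X$, for instance to $\R^d\times\R^m\times\R^N$, by keeping the same formulas. The softmax $\pi_\theta$ is smooth and strictly positive, so $\log\pi_\theta$ and $\nabla_\theta\log\pi_\theta(a\mid i)=\tau^{-1}\bigl(\psi_{i,a}-\sum_b\pi_\theta(b\mid i)\psi_{i,b}\bigr)$ are smooth. The terms $\nu_{\theta,\mu}$, $b$, $A$ and $\widetilde G$ are polynomial in $(\mu,w)$ with smooth coefficients in $\theta$, and $D(\theta)$ is polynomial. The term $Q_\theta^*\mu$ is Lipschitz in $\theta$ by Assumption~\ref{ass:gen}, and we may extend $\theta\mapsto Q^*_\theta$ Lipschitzly off $\Theta$, e.g.\ by composing with the coordinatewise projection onto $\Theta$. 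Its dependence on $\mu$ is linear. Hence the field is locally Lipschitz, and Picard--Lindel\"of gives unique local solutions depending continuously on the data.

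\emph{Invariance of $\Theta$ and $\Delta_S$.} For the box, note that $\dot\theta_k=(R_\theta^2-\theta_k^2)\widetilde G_k$ vanishes on the faces $\theta_k=\pm R_\theta$. By uniqueness, the hyperplane $\{\theta_k=\pm R_\theta\}$ is therefore invariant for the $k$-th component in the following sense: $\theta_k$ cannot reach $\pm R_\theta$ from the interior unless it starts there. The cleanest way to see this is that $u=R_\theta^2-\theta_k^2$ satisfies $\dot u=-2\theta_k\widetilde G_k\,u$, so $u(t)=u(0)\exp(\cdots)\ge 0$. For the simplex, $\frac{d}{dt}\sum_i\mu_i=\delta^{-1}\mathbf 1^\top Q_\theta^*\mu=0$ because the rows of $Q_\theta$ sum to zero. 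Nonnegativity follows from $\delta\dot\mu_j=\sum_{i\ne j}q_{ij}\mu_i+q_{jj}\mu_j\ge q_{jj}\mu_j$ whenever $\mu\ge0$. The rigorous version uses a Metzler-matrix or variation-of-constants argument: $\mu(t)=\Phi(t,0)\mu(0)$, where $\Phi$ is the evolution of a time-dependent Metzler matrix and hence entrywise nonnegative, with $\theta(\cdot)$ treated as given.

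\emph{Critic bound.} On $\Theta\times\Delta_S$ we have $A(\theta,\mu)\succeq\lambda_c I$ and $|b|\le B_b$, since $\nu$ is a probability measure. Then
\[
\tfrac12\tfrac{d}{dt}|w|^2\le B_b|w|-\lambda_c|w|^2\le -\tfrac{\lambda_c}{2}|w|^2+\tfrac{B_b^2}{2\lambda_c}.
\]
Alternatively, $\frac{d}{dt}|w|\le B_b-\lambda_c|w|$ wherever $w\ne0$. Gronwall gives $|w(t)|\le e^{-\lambda_c t}|w(0)|+B_b/\lambda_c$. This yields global existence, since $\theta$ and $\mu$ lie in compact sets and $w$ is bounded. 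It also shows forward invariance of $K$, because $B_b/\lambda_c\le R_w$: at $|w|=R_w$ we have $\frac{d}{dt}|w|\le B_b-\lambda_c R_w\le -B_b\le 0$, using $R_w\ge 2B_b/\lambda_c$. Finally, every bounded set $\{|w(0)|\le\rho\}$ enters $K$ after the time $t_\rho=\lambda_c^{-1}\log^+\bigl(\rho/(R_w-B_b/\lambda_c)\bigr)$. Here $R_w-B_b/\lambda_c\ge \max\{1/2,\,B_b/\lambda_c\}>0$, so this time is finite.

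\emph{Attractor.} Continuity of $(t,x)\mapsto S_\delta(t)x$ follows from continuous dependence. Since $K$ is compact, forward invariant and absorbing, $\cA_\delta=\omega(K)=\bigcap_{s\ge0}\overline{\bigcup_{t\ge s}S_\delta(t)K}$ is a nested intersection of nonempty compacts, hence nonempty and compact. The standard theorem (Hale/Temam) then gives invariance and attraction of bounded sets. Uniqueness holds because any compact invariant attracting set is contained in $\omega(K)$, and vice versa.

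I expect the main obstacle to be the rigorous invariance of the closed box $\Theta$ and the simplex at the boundary, together with the extension of the data off $X$ needed to apply ODE theory. The critic estimate and the attractor step are routine.
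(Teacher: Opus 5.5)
Your proposal is correct and follows the paper's proof essentially step for step. Both extend $Q_\theta$ by coordinatewise clipping to get a locally Lipschitz ambient field, prove invariance of $\Theta$ and $\Delta_S$, and use the critic energy estimate for global existence, forward invariance of $K$ and absorption into $K$. Both then finish with the standard $\omega(K)$ construction and the mutual-attraction uniqueness argument. Your sub-arguments differ only slightly and are, if anything, cleaner: a single integrating factor for $u=R_\theta^2-\theta_k^2$ replaces the paper's $\arctanh$/boundary case split, and entrywise nonnegativity of the evolution operator of the Metzler family $\delta^{-1}Q^*_{\theta(t)}$ is a more rigorous substitute for the paper's first-exit-time argument.
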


\begin{proof}
Let $\Pi_\Theta : \R^d \to \Theta$ be the coordinatewise clipping map and
extend the generator family from $\Theta$ to $\R^d$ by
\[
\widetilde Q_\vartheta := Q_{\Pi_\Theta \vartheta},
\qquad
\vartheta \in \R^d.
\]
Since $\Pi_\Theta$ is $1$-Lipschitz and $\theta \mapsto Q_\theta^*$ is
Lipschitz on $\Theta$, the map $\vartheta \mapsto \widetilde Q_\vartheta^*$ is
locally Lipschitz on $\R^d$. Therefore the ambient vector field
\[
(\theta,w,\mu)
\mapsto
\left(
G(\theta,w,\mu),\,
b(\theta,\mu) - A(\theta,\mu)w,\,
\delta^{-1}\widetilde Q_\theta^* \mu
\right)
\]
is locally Lipschitz on $\R^d \times \R^m \times \R^N$, so each initial datum
in $X$ generates a unique local solution.

We first prove forward invariance of $\Theta$. For each coordinate,
\[
\dot\theta_j = (R_\theta^2 - \theta_j^2) h_j(\theta,w,\mu)
\]
for a continuous function $h_j$. If $|\theta_j(0)| < R_\theta$, then
$y_j := \arctanh(\theta_j/R_\theta)$ satisfies
\[
\dot y_j = R_\theta h_j(\theta,w,\mu),
\]
so $y_j$ cannot blow up in finite time and therefore $|\theta_j(t)| < R_\theta$
for all times of existence. If $\theta_j(0) = R_\theta$, set
$u_j(t):=R_\theta-\theta_j(t)$. Then
\[
\dot u_j
=
-\bigl(R_\theta + \theta_j(t)\bigr) h_j(\theta(t),w(t),\mu(t))\, u_j,
\]
so
\[
\frac{d}{dt}\left(
u_j(t)\exp\!\int_0^t \bigl(R_\theta + \theta_j(s)\bigr)
h_j(\theta(s),w(s),\mu(s))\,ds
\right) = 0.
\]
Since $u_j(0)=0$ and the exponential factor is strictly positive, $u_j(t)=0$
for all times of existence; hence $\theta_j(t)=R_\theta$. The case
$\theta_j(0)=-R_\theta$ is identical with $v_j(t):=R_\theta+\theta_j(t)$.
Hence $\theta(t) \in \Theta$ for all $t$.

For the distribution component, total mass is preserved because
\[
\frac{d}{dt}\sum_{i=1}^N \mu_i(t)
=
\delta^{-1}\sum_{i=1}^N (Q_{\theta(t)}^* \mu(t))_i
=
\delta^{-1}(Q_{\theta(t)}\mathbf{1}) \cdot \mu(t)
= 0.
\]
Suppose by contradiction that some component becomes negative, and define the
first exit time from the nonnegative cone by
\[
t_* := \inf\left\{t \geq 0 : \min_{1 \leq i \leq N}\mu_i(t) < 0\right\}.
\]
At time $t_*$ all components are nonnegative and at least one component, say
$\mu_i$, satisfies $\mu_i(t_*) = 0$. Since $Q_{\theta(t_*)}$ is a generator,
\[
\delta \dot\mu_i(t_*)
=
\sum_{j \neq i} q_{ji}(\theta(t_*)) \mu_j(t_*)
\geq 0.
\]
Thus $\mu_i$ cannot cross from nonnegative to negative values at $t_*$, a
contradiction. Since $\mu(0) \in \Delta_S$, every component stays nonnegative
and the total mass stays equal to $1$, so $\mu(t) \in \Delta_S$ for all times
of existence.

For the critic, the coercivity of $A(\theta,\mu)$ implies
\[
v \cdot A(\theta,\mu) v \geq \lambda_c |v|^2
\qquad
\text{for all } v \in \R^m.
\]
Since $|b(\theta,\mu)| \leq B_b$, we obtain
\[
\frac{d}{dt}|w|^2
=
2 w \cdot b(\theta,\mu) - 2 w \cdot A(\theta,\mu)w
\leq
-\lambda_c |w|^2 + \frac{B_b^2}{\lambda_c}.
\]
Gronwall's inequality yields
\[
|w(t)|^2
\leq
e^{-\lambda_c t}|w(0)|^2
+ \frac{B_b^2}{\lambda_c^2}\bigl(1-e^{-\lambda_c t}\bigr).
\]
No coordinate can therefore blow up in finite time, so the local solution
extends globally and defines a continuous semiflow $S_\delta(t)$ on $X$.

If $(\theta_0,w_0,\mu_0) \in K$, then the previous invariance arguments give
$\theta(t) \in \Theta$ and $\mu(t) \in \Delta_S$. Since
$R_w \geq B_b/\lambda_c$, the critic estimate shows that
$|w(t)| \leq R_w$ for all $t \geq 0$. Hence $K$ is forward invariant.

If $B \subset X$ is bounded and
$M_B := \sup\{|w| : (\theta,w,\mu) \in B\}$, choose $T_B \geq 0$ such that
\[
e^{-\lambda_c T_B} M_B^2
\leq
R_w^2 - \frac{B_b^2}{\lambda_c^2}.
\]
Then the same estimate gives $|w(t)| \leq R_w$ for every trajectory starting in
$B$ and every $t \geq T_B$. Therefore $S_\delta(t)B \subset K$ for $t \geq T_B$, so
$K$ absorbs every bounded subset of $X$.

We are now in the standard finite-dimensional dissipative setting: $K$ is
compact, forward invariant, and absorbing. It follows that
$\cA_\delta := \omega(K)$ is nonempty, compact, invariant, and attracts every
bounded subset of $X$; see, for example, Robinson \cite{Robinson} or Temam
\cite{Temam}. If $\cA_1$ and $\cA_2$ are two compact invariant sets attracting
all bounded subsets of $X$, then $\cA_1$ attracts $\cA_2$ and $\cA_2$ attracts
$\cA_1$; invariance then forces $\cA_1 = \cA_2$. Hence the global attractor is
unique.
\end{proof}

\begin{remark}\label{rem:boundary}
The confinement matrix $D(\theta)$ vanishes on $\partial\Theta$. A boundary
equilibrium of \eqref{eq:main} may therefore satisfy
$D(\theta_*)\widetilde G(\theta_*,w_*,\mu_*) = 0$ without satisfying
$\widetilde G(\theta_*,w_*,\mu_*) = 0$. The theorem proves dissipativity for the
boxed system exactly as stated; stronger claims about stationary points of an
undamped actor equation require a different confinement mechanism or an
interiority argument. In other words, the damping mechanism can create spurious
equilibria at the boundary of the parameter box where the policy gradient is
nonzero but the confinement factor kills it; these are artifacts of the boxing,
not genuine stationary points of the underlying actor-critic dynamics.
\end{remark}

\section{Stationary distribution under a fixed policy}

We now ask: if the policy parameter $\theta$ were held fixed, what stationary
distribution would the controlled chain converge to? The answer requires a
mixing condition ensuring that the chain forgets its initial state uniformly
over all fixed policies.

\begin{assumption}\label{ass:mix}
In addition to Assumption \ref{ass:gen}, there exist constants
$C_{\mathrm{mix}} \geq 1$ and $\gamma > 0$ such that
\[
|e^{tQ_\theta^*}\xi|_1
\leq
C_{\mathrm{mix}} e^{-\gamma t} |\xi|_1
\qquad
\text{for all } \theta \in \Theta,\ \xi \in \cZ,\ t \geq 0.
\]
\end{assumption}

Under Assumption \ref{ass:mix}, for each fixed $\theta \in \Theta$ the Markov
chain with generator $Q_\theta$ has a unique stationary distribution that
depends Lipschitz-continuously on $\theta$. Recent control and perturbation
results on policy-dependent invariant measures point in the same direction
\cite{Yuksel24,Truquet20}, but the finite-state resolvent argument below is
enough for the present note.

\begin{proposition}\label{prop:invlaw}
Under Assumption \ref{ass:mix}, for each $\theta \in \Theta$ there exists a
unique vector $\mu_\theta \in \Delta_S$ such that
\[
Q_\theta^* \mu_\theta = 0.
\]
Moreover, the map $\Theta \ni \theta \mapsto \mu_\theta \in \Delta_S$ is
Lipschitz and satisfies
\[
|\mu_\theta - \mu_{\bar\theta}|_1
\leq
\frac{C_{\mathrm{mix}}L_Q}{\gamma} |\theta - \bar\theta|
\qquad
\text{for all } \theta,\bar\theta \in \Theta.
\]
\end{proposition}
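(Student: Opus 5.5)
The plan has two parts: existence and uniqueness of the invariant law from the contraction of the semigroup on $\cZ$, then the Lipschitz bound from a variation-of-constants (Duhamel) identity integrated against the mixing estimate.

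\textbf{Existence and uniqueness.} Fix $\theta\in\Theta$ and write $P_t:=e^{tQ_\theta^*}$. By the invariance argument in the proof of Theorem \ref{thm:attractor}, applied with $\theta$ frozen, $P_t$ maps $\Delta_S$ into itself. Since $Q_\theta\mathbf 1=0$, it also maps $\cZ$ into $\cZ$. For any $\mu\in\Delta_S$ and $s,t\ge 0$ we have $P_s\mu-\mu\in\cZ$, so Assumption \ref{ass:mix} gives
\[
|P_{t+s}\mu-P_t\mu|_1=|P_t(P_s\mu-\mu)|_1\le 2C_{\mathrm{mix}}e^{-\gamma t}.
\]
Hence $(P_t\mu)_{t\ge0}$ is Cauchy and converges to some $\mu_\theta$ in the closed set $\Delta_S$. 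From $P_s\mu_\theta=\lim_t P_{t+s}\mu=\mu_\theta$ for every $s$, differentiating at $s=0$ yields $Q_\theta^*\mu_\theta=0$.

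For uniqueness, suppose $\nu\in\Delta_S$ also satisfies $Q_\theta^*\nu=0$. Then $\nu-\mu_\theta\in\cZ$ is fixed by every $P_t$, so $|\nu-\mu_\theta|_1\le C_{\mathrm{mix}}e^{-\gamma t}|\nu-\mu_\theta|_1\to0$, and $\nu=\mu_\theta$.

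\textbf{Lipschitz bound.} Fix $\theta,\bar\theta$ and set $\xi:=\mu_\theta-\mu_{\bar\theta}\in\cZ$. Using $Q_\theta^*\mu_\theta=0$, we get
\[
Q_\theta^*\xi=-Q_\theta^*\mu_{\bar\theta}=(Q_{\bar\theta}^*-Q_\theta^*)\mu_{\bar\theta}=:\eta.
\]
Note that $\eta\in\cZ$, because both generators annihilate $\mathbf 1$ on the observable side, so $\mathbf 1\cdot Q^*v=0$ for every $v$. By Assumption \ref{ass:gen} and $|\mu_{\bar\theta}|_1=1$,
\[
|\eta|_1\le L_Q|\theta-\bar\theta|.
\]
Next, since $\frac{d}{ds}P_s\xi=P_sQ_\theta^*\xi=P_s\eta$, integrating gives
\[
P_t\xi-\xi=\int_0^tP_s\eta\,ds.
\]
Letting $t\to\infty$, we have $P_t\xi\to0$ by mixing since $\xi\in\cZ$, and the integral converges absolutely because $|P_s\eta|_1\le C_{\mathrm{mix}}e^{-\gamma s}|\eta|_1$. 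Therefore
\[
\xi=-\int_0^\infty P_s\eta\,ds,
\qquad
|\xi|_1\le \frac{C_{\mathrm{mix}}}{\gamma}|\eta|_1\le\frac{C_{\mathrm{mix}}L_Q}{\gamma}|\theta-\bar\theta|,
\]
which is the claimed estimate. In effect, $-\int_0^\infty P_s\,ds$ is the inverse of $Q_\theta^*$ restricted to $\cZ$, with norm at most $C_{\mathrm{mix}}/\gamma$.

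\textbf{Main obstacle.} The step needing the most care is checking that every vector the mixing estimate is applied to lies in $\cZ$. This covers $P_s\mu-\mu$, $\xi$ and $\eta$, and in each case it follows from the conservation identity $Q_\theta\mathbf 1=0$. The rest is routine.
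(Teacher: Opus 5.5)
Your proposal is correct and follows essentially the same route as the paper: a Cauchy argument for $e^{tQ_\theta^*}\mu$ via mixing on $\cZ$ for existence, the same contraction argument for uniqueness, and the resolvent representation $\mu_\theta-\mu_{\bar\theta}=-\int_0^\infty e^{sQ_\theta^*}(Q_{\bar\theta}^*-Q_\theta^*)\mu_{\bar\theta}\,ds$ for the Lipschitz bound. The only difference is that you derive the identity $\mathcal{R}_\theta Q_\theta^*=\Id$ on $\cZ$ inline, using the fundamental theorem of calculus together with $e^{tQ_\theta^*}\xi\to 0$, and you explicitly check that $(Q_{\bar\theta}^*-Q_\theta^*)\mu_{\bar\theta}\in\cZ$; the paper leaves both steps implicit.
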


\begin{proof}
Fix $\theta \in \Theta$ and $\mu^0 \in \Delta_S$. Since $Q_\theta$ is a
generator, the semigroup $e^{tQ_\theta^*}$ is column-stochastic, so
$e^{tQ_\theta^*}\mu^0 \in \Delta_S$ for every $t \geq 0$. If $0 \leq t \leq s$,
then
\[
e^{sQ_\theta^*}\mu^0 - e^{tQ_\theta^*}\mu^0
=
e^{tQ_\theta^*}\bigl(e^{(s-t)Q_\theta^*}\mu^0 - \mu^0\bigr),
\]
and the difference in parentheses belongs to $\cZ$. Assumption
\ref{ass:mix} therefore gives
\[
|e^{sQ_\theta^*}\mu^0 - e^{tQ_\theta^*}\mu^0|_1
\leq
2 C_{\mathrm{mix}} e^{-\gamma t}.
\]
Hence $e^{tQ_\theta^*}\mu^0$ is Cauchy as $t \to \infty$, so it converges to a
limit $\mu_\theta \in \Delta_S$. The semigroup property gives
$e^{rQ_\theta^*}\mu_\theta = \mu_\theta$ for every $r \geq 0$, hence
$Q_\theta^* \mu_\theta = 0$.

If $\nu_\theta \in \Delta_S$ also satisfies $Q_\theta^*\nu_\theta = 0$, then
\[
e^{tQ_\theta^*}(\mu_\theta - \nu_\theta) = \mu_\theta - \nu_\theta
\qquad
\text{for all } t \geq 0.
\]
Because $\mu_\theta - \nu_\theta \in \cZ$, Assumption \ref{ass:mix} yields
\[
|\mu_\theta - \nu_\theta|_1
\leq
C_{\mathrm{mix}} e^{-\gamma t} |\mu_\theta - \nu_\theta|_1.
\]
Letting $t \to \infty$ gives $\mu_\theta = \nu_\theta$.

For the Lipschitz estimate, define the resolvent operator on $\cZ$ by
\[
\mathcal{R}_\theta \xi := -\int_0^\infty e^{tQ_\theta^*}\xi\,dt.
\]
The integral converges absolutely and
\[
|\mathcal{R}_\theta \xi|_1
\leq
\frac{C_{\mathrm{mix}}}{\gamma} |\xi|_1.
\]
A second integration-by-parts computation gives
$\mathcal{R}_\theta Q_\theta^* = \Id$ on $\cZ$. Subtraction of the
relations $Q_\theta^*\mu_\theta = 0$ and $Q_{\bar\theta}^*\mu_{\bar\theta} = 0$
gives
\[
\mu_\theta - \mu_{\bar\theta}
=
\mathcal{R}_\theta (Q_{\bar\theta}^* - Q_\theta^*) \mu_{\bar\theta}.
\]
Therefore
\[
|\mu_\theta - \mu_{\bar\theta}|_1
\leq
\frac{C_{\mathrm{mix}}}{\gamma}
\|Q_{\bar\theta}^* - Q_\theta^*\|_{1 \to 1}
|\mu_{\bar\theta}|_1
\leq
\frac{C_{\mathrm{mix}}L_Q}{\gamma} |\theta - \bar\theta|,
\]
because $|\mu_{\bar\theta}|_1 = 1$.
\end{proof}

\begin{remark}
Under Assumption \ref{ass:mix}, the exact system \eqref{eq:main} has the
natural reduced invariant-law system
\[
\dot{\bar\theta} = G(\bar\theta,\bar w,\mu_{\bar\theta}),
\qquad
\dot{\bar w}
=
b(\bar\theta,\mu_{\bar\theta}) - A(\bar\theta,\mu_{\bar\theta})\bar w
\]
on $Y := \Theta \times \R^m$. Proposition \ref{prop:invlaw} makes the vector
field locally Lipschitz, and the same box-invariance and critic-energy
estimates as in Theorem \ref{thm:attractor} show that
\[
K_Y := \Theta \times \overline{B_{R_w}(0)}
\]
is compact, forward invariant, and absorbing for the reduced semiflow
$\bar S(t)$ on $Y$. Hence $\bar S(t)$ possesses a compact global attractor
$\cA_0 = \omega(K_Y)$. Its lift
\[
\widehat{\cA}_0
:=
\left\{
(\theta,w,\mu_\theta) \in X : (\theta,w) \in \cA_0
\right\}
\]
is compact, and the lifted reduced semiflow
\[
\Gamma_0(t)(\theta_0,w_0,\mu_0)
:=
\bigl(\bar\theta(t),\bar w(t),\mu_{\bar\theta(t)}\bigr)
\]
has global attractor $\widehat{\cA}_0$.
\end{remark}

\section{Fast-slow reduction}

The previous section showed that each fixed policy has a unique stationary
distribution. In the coupled system, however, the policy is not fixed---it
evolves on a slow timescale while the distribution tries to track it on a fast
timescale. The question is whether the distribution actually stays close to the
moving target $\mu_{\theta(t)}$. This is a singular perturbation problem: as
$\delta \to 0$, the distribution equation speeds up and one expects $\mu$ to
approximately slave to the instantaneous stationary distribution. Making this
rigorous requires controlling the distribution dynamics along the actual
trajectory $\theta(t)$, not merely at frozen parameter values. The frozen
resolvent estimate from Proposition \ref{prop:invlaw} is not sufficient for
this, because along the coupled flow the fast equation is non-autonomous
\cite{Khalil,Verhulst}. The next assumption isolates the additional estimate needed.

\begin{assumption}\label{ass:path}
In addition to Assumption \ref{ass:mix}, there exist constants
$C_{\mathrm{path}} \geq 1$ and $\gamma_{\mathrm{path}} > 0$ such that, for every
measurable path $\vartheta : [0,\infty) \to \Theta$, the evolution family
$U_\vartheta^\delta(t,s) : \cZ \to \cZ$ of
\[
\delta \dot\xi(t) = Q_{\vartheta(t)}^* \xi(t),
\qquad
\xi(s) = \xi_s \in \cZ,
\]
satisfies
\[
|U_\vartheta^\delta(t,s)\xi|_1
\leq
C_{\mathrm{path}} e^{-\gamma_{\mathrm{path}}(t-s)/\delta} |\xi|_1
\qquad
\text{for all } \xi \in \cZ,\ 0 \leq s \leq t.
\]
\end{assumption}

\begin{theorem}\label{thm:tracking}
Assume Assumptions \ref{ass:mix} and \ref{ass:path}. Fix $T > 0$. Then there
exists a constant $C_T > 0$, independent of $\delta \in (0,1]$, such that the
following holds.

For $x_0 = (\theta_0,w_0,\mu_0) \in K$, write
\[
x^\delta(t)
:=
S_\delta(t)x_0
=
\bigl(\theta^\delta(t),w^\delta(t),\mu^\delta(t)\bigr),
\]
let
\[
\bigl(\bar\theta(t),\bar w(t)\bigr)
:=
\bar S(t)(\theta_0,w_0),
\]
and define the lifted reduced trajectory by
\[
\Gamma_0(t)x_0
:=
\bigl(\bar\theta(t),\bar w(t),\mu_{\bar\theta(t)}\bigr).
\]
Then, for every $t \in [0,T]$,
\[
\left|
\mu^\delta(t) - \mu_{\theta^\delta(t)}
\right|_1
\leq
C_T
\left(
e^{-\gamma_{\mathrm{path}} t/\delta}
\left|
\mu_0 - \mu_{\theta_0}
\right|_1
+
\delta
\right),
\]
and
\[
d_X\!\left(
S_\delta(t)x_0,
\Gamma_0(t)x_0
\right)
\leq
C_T
\left(
e^{-\gamma_{\mathrm{path}} t/\delta}
\left|
\mu_0 - \mu_{\theta_0}
\right|_1
+
\delta
\right).
\]
Consequently, for every fixed $t_* \in (0,T]$,
\[
\sup_{x_0 \in K}
\sup_{t_* \leq t \leq T}
d_X\!\left(
S_\delta(t)x_0,
\Gamma_0(t)x_0
\right)
\longrightarrow 0
\qquad
\text{as } \delta \to 0.
\]
\end{theorem}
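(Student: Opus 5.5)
The plan is to introduce the fast deviation $\eta(t) := \mu^\delta(t) - \mu_{\theta^\delta(t)} \in \cZ$, derive a variation-of-constants formula for it along the actual path $\vartheta := \theta^\delta$, and then close a Gronwall argument for the slow variables. First we need uniform bounds on $K$. Since $K$ is forward invariant (Theorem \ref{thm:attractor}), the trajectory stays in a compact set. The maps $b$, $A$ and $G$ are smooth in $\theta$ and affine in $(w,\mu)$, so there is a constant $M$ with $|\dot\theta^\delta| \le M$ on $K$, uniformly in $\delta$. The same Lipschitz constant $L$ controls $(G, b - Aw)$ in $(\theta,w,\mu)$ on $K$ with the metric $d_X$.

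\textbf{Fast estimate.} Proposition \ref{prop:invlaw} makes $t \mapsto \mu_{\theta^\delta(t)}$ Lipschitz with constant $C_{\mathrm{mix}}L_Q M/\gamma$, hence differentiable a.e. Because $Q^*_{\theta}\mu_\theta = 0$, the deviation satisfies
\[
\delta\dot\eta = Q^*_{\theta^\delta(t)}\eta - \delta\,\tfrac{d}{dt}\mu_{\theta^\delta(t)}
\]
a.e., with forcing in $\cZ$. Duhamel's formula with the evolution family of Assumption \ref{ass:path} gives
\[
\eta(t) = U^\delta_{\theta^\delta}(t,0)\eta(0) - \int_0^t U^\delta_{\theta^\delta}(t,s)\,\tfrac{d}{ds}\mu_{\theta^\delta(s)}\,ds .
\]
Therefore
\[
|\eta(t)|_1 \le C_{\mathrm{path}}e^{-\gamma_{\mathrm{path}}t/\delta}|\eta(0)|_1 + C_{\mathrm{path}}\frac{C_{\mathrm{mix}}L_QM}{\gamma}\int_0^t e^{-\gamma_{\mathrm{path}}(t-s)/\delta}\,ds ,
\]
and the integral is at most $\delta/\gamma_{\mathrm{path}}$. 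This is the first claimed bound. The main obstacle is justifying Duhamel when the forcing is only Lipschitz (absolutely continuous) in time. I would handle it by differentiating $s \mapsto U(t,s)\eta(s)$ using $\partial_s U(t,s) = -\delta^{-1}U(t,s)Q^*_{\vartheta(s)}$, which holds a.e. for the linear ODE with bounded measurable coefficients, and then integrating. Measurability of $\theta^\delta$ is automatic since it is continuous.

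\textbf{Slow tracking.} Let $e(t) := |\theta^\delta - \bar\theta| + |w^\delta - \bar w|$. The reduced trajectory also stays in $K_Y$. Subtracting the two systems and inserting $\mu_{\theta^\delta}$, we bound $|\mu^\delta - \mu_{\bar\theta}|_1 \le |\eta|_1 + (C_{\mathrm{mix}}L_Q/\gamma)|\theta^\delta - \bar\theta|$. This yields
\[
\dot e \le L'\,e + L\,|\eta(t)|_1 .
\]
Integrating with $e(0)=0$ and using $\int_0^t e^{-\gamma_{\mathrm{path}}s/\delta}\,ds \le \delta/\gamma_{\mathrm{path}}$ gives $e(t) \le e^{L'T}L\bigl(C'\delta|\eta(0)|_1 + C''\delta\bigr) \le C\delta$ on $[0,T]$. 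Here $|\eta(0)|_1 \le 2$.

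\textbf{Assembling the bounds.} The $\mu$-component of $d_X$ is bounded by
\[
|\mu^\delta - \mu_{\bar\theta}|_1 \le |\eta|_1 + (C_{\mathrm{mix}}L_Q/\gamma)\,e ,
\]
so $d_X\bigl(S_\delta(t)x_0,\Gamma_0(t)x_0\bigr) \le C_T\bigl(e^{-\gamma_{\mathrm{path}}t/\delta}|\mu_0 - \mu_{\theta_0}|_1 + \delta\bigr)$. The constants depend only on the data of $K$ and on $T$, not on $\delta \le 1$ or $x_0$. For $t \ge t_*$ the exponential term is at most $2e^{-\gamma_{\mathrm{path}}t_*/\delta} \to 0$, which gives the uniform convergence claim.
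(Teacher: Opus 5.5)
Your proposal is correct and follows the paper's proof essentially step for step. You use Duhamel for $\eta=\mu^\delta-\mu_{\theta^\delta}$ along the path $\theta^\delta$ with Assumption~\ref{ass:path}, then a Gronwall bound for the actor--critic error driven by $|\eta|_1$ (whose time integral is $O(\delta)$), then the triangle inequality $d_X\le(1+L_\mu)E+|\eta|_1$. Your justification of variation of constants via the backward equation for $U$ is extra detail the paper leaves implicit. The only slip is calling $G$ affine in $(w,\mu)$ jointly: it contains products $\mu_i\,(\varphi_{i,a}\cdot w)$. This is harmless, since smoothness on the convex compact set $K$ already gives the Lipschitz bounds you use.
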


\begin{proof}
Set
\[
M_G := \sup_{(\theta,w,\mu) \in K} |G(\theta,w,\mu)|,
\]
which is finite because $G$ is continuous on the compact set $K$. Since
$x^\delta(t)$ stays in $K$ by Theorem \ref{thm:attractor}, one has
$|\dot\theta^\delta(t)| \leq M_G$ for all $t \geq 0$.

Define
\[
\mu^\sharp(t) := \mu_{\theta^\delta(t)}.
\]
Proposition \ref{prop:invlaw} gives a Lipschitz constant $L_\mu$ for
$\theta \mapsto \mu_\theta$. Since $t \mapsto \theta^\delta(t)$ is absolutely
continuous, so is $t \mapsto \mu^\sharp(t)$, and
\[
|\dot\mu^\sharp(t)|_1 \leq L_\mu M_G
\]
for almost every $t \in [0,T]$. The state-layer error
\[
\eta(t) := \mu^\delta(t) - \mu^\sharp(t)
\]
belongs to $\cZ$ and satisfies
\[
\delta \dot\eta(t)
=
Q_{\theta^\delta(t)}^* \eta(t)
- \delta \dot\mu^\sharp(t)
\]
for almost every $t$. Variation of constants along the path
$\vartheta(t)=\theta^\delta(t)$ therefore gives
\[
\eta(t)
=
U_\vartheta^\delta(t,0)\eta(0)
- \int_0^t U_\vartheta^\delta(t,s)\dot\mu^\sharp(s)\,ds.
\]
Assumption \ref{ass:path} yields
\[
\begin{aligned}
|\eta(t)|_1
&\leq
C_{\mathrm{path}} e^{-\gamma_{\mathrm{path}} t/\delta} |\eta(0)|_1
+ C_{\mathrm{path}}
\int_0^t
e^{-\gamma_{\mathrm{path}}(t-s)/\delta}
|\dot\mu^\sharp(s)|_1\,ds \\
&\leq
C_{\mathrm{path}} e^{-\gamma_{\mathrm{path}} t/\delta}
\left|
\mu_0 - \mu_{\theta_0}
\right|_1
+
\frac{C_{\mathrm{path}}L_\mu M_G}{\gamma_{\mathrm{path}}}\delta.
\end{aligned}
\]
This is the first estimate after enlarging $C_T$.

Now define the actor-critic error
\[
E(t)
:=
\left|
\theta^\delta(t) - \bar\theta(t)
\right|
+
\left|
w^\delta(t) - \bar w(t)
\right|.
\]
Both trajectories stay in $K_Y$ by Theorem \ref{thm:attractor} and the remark
above, so the maps $G$, $b$, and $A$ are Lipschitz on the compact set
$K = K_Y \times \Delta_S$. Using the integral equations for the exact and
reduced systems, the Lipschitz continuity of $\theta \mapsto \mu_\theta$, and
the bound $|\bar w(t)| \leq R_w$, one obtains a constant $L_T > 0$ such that
\[
E(t)
\leq
L_T \int_0^t
\left(
E(s)
+
\left|
\mu^\delta(s) - \mu_{\theta^\delta(s)}
\right|_1
\right)\,ds
\]
for every $t \in [0,T]$. Gronwall's inequality and the estimate for $\eta$
therefore give
\[
\begin{aligned}
E(t)
&\leq
L_T e^{L_T T}
\int_0^t
\left|
\mu^\delta(s) - \mu_{\theta^\delta(s)}
\right|_1\,ds \\
&\leq
C_T
\left(
\delta
\left|
\mu_0 - \mu_{\theta_0}
\right|_1
+
\delta
\right).
\end{aligned}
\]
Since both $\mu_0$ and $\mu_{\theta_0}$ lie in $\Delta_S$, the factor
$|\mu_0-\mu_{\theta_0}|_1$ is bounded by $2$ on $K$, so after enlarging $C_T$
we may write simply
\[
E(t) \leq C_T \delta.
\]
Finally,
\[
\begin{aligned}
d_X\!\left(
S_\delta(t)x_0,
\Gamma_0(t)x_0
\right)
&\leq
E(t)
+
\left|
\mu^\delta(t) - \mu_{\theta^\delta(t)}
\right|_1
+
\left|
\mu_{\theta^\delta(t)} - \mu_{\bar\theta(t)}
\right|_1 \\
&\leq
(1+L_\mu)E(t)
+
\left|
\mu^\delta(t) - \mu_{\theta^\delta(t)}
\right|_1,
\end{aligned}
\]
so the second displayed estimate follows from the previous two bounds. The
uniform convergence on $[t_*,T]$ follows because
$|\mu_0-\mu_{\theta_0}|_1 \leq 2$ on $K$ and
$e^{-\gamma_{\mathrm{path}} t/\delta} \to 0$ uniformly for
$t \in [t_*,T]$ as $\delta \to 0$.
\end{proof}

\begin{corollary}\label{cor:usc}
Assume Assumptions \ref{ass:mix} and \ref{ass:path}. Then, for every
$\delta>0$, the exact semiflow $S_\delta(t)$ possesses a compact global
attractor $\cA_\delta \subset K$, and
\[
\dist_X\!\left(
\cA_\delta,
\widehat{\cA}_0
\right)
\longrightarrow 0
\qquad
\text{as } \delta \to 0,
\]
where
\[
\dist_X(B,C)
:=
\sup_{x \in B} \inf_{y \in C} d_X(x,y).
\]
\end{corollary}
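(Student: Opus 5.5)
Existence of $\cA_\delta=\omega(K)\subset K$ is already Theorem \ref{thm:attractor}, since $K$ is closed and forward invariant. For upper semicontinuity I will use the standard route: attractor invariance plus finite-time tracking (Theorem \ref{thm:tracking}) plus attraction by $\widehat{\cA}_0$ under the lifted reduced flow. Fix $\varepsilon>0$. The lifted semiflow $\Gamma_0$ has $\widehat{\cA}_0$ as global attractor and $K$ is bounded, so I choose $T=T_\varepsilon>0$ with
\[
\sup_{x_0\in K}\dist_X\bigl(\Gamma_0(T)x_0,\widehat{\cA}_0\bigr)\le \varepsilon/2 .
\]
To justify this I note that $\Gamma_0(T)x_0$ depends only on $(\theta_0,w_0)\in K_Y$. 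Moreover $\cA_0$ attracts $K_Y$ in $Y$, and the lift $(\theta,w)\mapsto(\theta,w,\mu_\theta)$ is Lipschitz with constant $\max\{1,1+L_\mu\}$ by Proposition \ref{prop:invlaw}. Hence distances in $Y$ to $\cA_0$ control distances in $X$ to $\widehat{\cA}_0$ up to that constant.

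Now let $x\in\cA_\delta$. Invariance, $S_\delta(T)\cA_\delta=\cA_\delta$, gives some $x_0\in\cA_\delta\subset K$ with $x=S_\delta(T)x_0$. The triangle inequality then yields
\[
\dist_X(x,\widehat{\cA}_0)\le d_X\bigl(S_\delta(T)x_0,\Gamma_0(T)x_0\bigr)+\dist_X\bigl(\Gamma_0(T)x_0,\widehat{\cA}_0\bigr).
\]
The second term is at most $\varepsilon/2$. For the first, I apply Theorem \ref{thm:tracking} on $[0,T]$ with $t_*=T$. Using $|\mu_0-\mu_{\theta_0}|_1\le 2$, it is bounded by $C_T(2e^{-\gamma_{\mathrm{path}}T/\delta}+\delta)$ uniformly in $x_0\in K$. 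This tends to $0$ as $\delta\to0$, so for $\delta\le\delta_\varepsilon$ it is $\le\varepsilon/2$. Taking the supremum over $x\in\cA_\delta$ gives $\dist_X(\cA_\delta,\widehat{\cA}_0)\le\varepsilon$ for all small $\delta$, which is the claim. Here $T$ is chosen before $\delta$, and $C_T$ is independent of $\delta\in(0,1]$, so the order of quantifiers is consistent.

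The main obstacle is the first step: making sure the lifted attractor $\widehat{\cA}_0$ attracts $K$ uniformly under $\Gamma_0$, even though $\Gamma_0(t)$ discards the initial $\mu_0$, so that $\Gamma_0(0)\neq\Id$ off the graph. I will handle this directly rather than through abstract semiflow theory. Uniform attraction of $K_Y$ by $\cA_0$ under $\bar S$ follows from the remark after Proposition \ref{prop:invlaw} ($\cA_0=\omega(K_Y)$ with $K_Y$ compact absorbing). Transporting it through the Lipschitz lift gives the required uniform bound, and only at the single time $T$, which is all the argument uses. The second subtle point is that the backward-orbit step needs $\cA_\delta\subset K$ together with full invariance of $\cA_\delta$ under $S_\delta(T)$. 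Both come from Theorem \ref{thm:attractor}, which is where uniformity over $x_0\in K$ in Theorem \ref{thm:tracking} is essential.
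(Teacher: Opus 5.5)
Your proof is correct and follows the paper's route. You choose $T$ so that $\Gamma_0(T)K$ lies within $\varepsilon/2$ of $\widehat{\cA}_0$, then combine $\cA_\delta = S_\delta(T)\cA_\delta \subset S_\delta(T)K$ with the uniform-in-$K$ tracking estimate of Theorem~\ref{thm:tracking} at the single time $T$. Your explicit justification of the first step, via uniform attraction of $K_Y$ by $\cA_0$ and the $(1+L_\mu)$-Lipschitz lift, is a worthwhile addition: the paper simply asserts that $\widehat{\cA}_0$ attracts $K$ under $\Gamma_0$, even though $\Gamma_0(0)\neq\Id$ off the graph.
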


\begin{proof}
Fix $\rho > 0$. Since $\widehat{\cA}_0$ is the global attractor of the lifted
reduced semiflow $\Gamma_0(t)$, there exists $T_\rho > 0$ such that
\[
\dist_X\!\left(
\Gamma_0(T_\rho)K,
\widehat{\cA}_0
\right)
< \frac{\rho}{2}.
\]
By Theorem \ref{thm:tracking}, one can choose $\delta$ small enough that
\[
\sup_{x_0 \in K}
d_X\!\left(
S_\delta(T_\rho)x_0,
\Gamma_0(T_\rho)x_0
\right)
< \frac{\rho}{2}.
\]
Since $\cA_\delta \subset K$ and $\cA_\delta = S_\delta(T_\rho)\cA_\delta$, we
obtain
\[
\begin{aligned}
\dist_X\!\left(
\cA_\delta,
\widehat{\cA}_0
\right)
&\leq
\dist_X\!\left(
S_\delta(T_\rho)K,
\widehat{\cA}_0
\right) \\
&\leq
\sup_{x_0 \in K}
d_X\!\left(
S_\delta(T_\rho)x_0,
\Gamma_0(T_\rho)x_0
\right)
+
\dist_X\!\left(
\Gamma_0(T_\rho)K,
\widehat{\cA}_0
\right)
< \rho.
\end{aligned}
\]
Since $\rho>0$ was arbitrary, the claim follows.
The argument is an instance of the general upper-semicontinuity framework for
attractors under perturbation; see \cite{HLR} and \cite{Robinson}.
\end{proof}

\begin{proposition}\label{prop:minorization}
Assume that there exist a distinguished state $k_* \in S$ and a constant
$\underline q_* > 0$ such that
\[
q_{ik_*}(\theta) \geq \underline q_*
\qquad
\text{for all } \theta \in \Theta \text{ and all } i \neq k_*.
\]
Define
\[
\overline q
:=
\sup_{\theta \in \Theta}
\max_{1 \leq i \leq N}
\sum_{j \neq i} q_{ij}(\theta),
\qquad
\tau_* := \underline q_*^{-1},
\qquad
\alpha_* := e^{-\overline q/\underline q_*}.
\]
Then, for every measurable path $\vartheta : [0,\infty) \to \Theta$, every
$\delta>0$, and every $s \geq 0$, each column of
$P_\vartheta^\delta(s+\delta\tau_*,s)$ dominates $\alpha_* e_{k_*}$, where
$P_\vartheta^\delta(t,s)$ denotes the evolution operator on $\R^N$ for
\[
\delta \dot p(t) = Q_{\vartheta(t)}^* p(t).
\]
Consequently, Assumption \ref{ass:path} holds. More precisely, if $N \geq 2$,
then it holds with
\[
C_{\mathrm{path}} := (1-\alpha_*)^{-1},
\qquad
\gamma_{\mathrm{path}}
:=
\underline q_*
\log\!\left(\frac{1}{1-\alpha_*}\right).
\]
If $N=1$, then $\cZ=\{0\}$ and Assumption \ref{ass:path} is trivial.
(This degenerate case, in which the contraction constants from the $N \geq 2$
argument become vacuous, was identified during the Lean formalization and
required a separate branch in the proof.)
\end{proposition}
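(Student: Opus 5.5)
The proof has three steps: a pathwise Doeblin minorization over one block of length $\delta\tau_*$, a Dobrushin contraction of zero-sum vectors on each block, and chaining the blocks together.

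\textbf{Step 1: minorization.} Fix a measurable path $\vartheta$, $\delta>0$, $s\ge 0$, and a column $p(t):=P_\vartheta^\delta(t,s)e_j$. Since $P_\vartheta^\delta$ is an evolution family of column-stochastic matrices (same positivity and mass argument as in Theorem~\ref{thm:attractor}), $p(t)\in\Delta_S$ with $\sum_i p_i(t)=1$. I would track the $k_*$-coordinate:
\[
\delta\dot p_{k_*}=\sum_{i\neq k_*}q_{ik_*}(\vartheta)p_i+q_{k_*k_*}(\vartheta)p_{k_*}\ \ge\ \underline q_*\bigl(1-p_{k_*}\bigr)-\overline q\,p_{k_*}.
\]
The lower bound uses $q_{k_*k_*}\ge -\overline q$. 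If $j=k_*$, I would use the cruder bound $\delta\dot p_{k_*}\ge -\overline q\,p_{k_*}$, which gives $p_{k_*}(s+\delta\tau_*)\ge e^{-\overline q\tau_*}=\alpha_*$. If $j\neq k_*$, the differential inequality yields
\[
p_{k_*}(s+\delta\tau_*)\ \ge\ \frac{\underline q_*}{\underline q_*+\overline q}\bigl(1-e^{-(\underline q_*+\overline q)\tau_*}\bigr).
\]
The remaining task is to check that this is at least $e^{-\overline q/\underline q_*}$. With $x=\overline q/\underline q_*$, this is the elementary inequality $(1-e^{-1-x})/(1+x)\ge e^{-x}$. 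This step is where I expect the main obstacle, since the constants must match exactly. If the inequality fails for some $x$, I would switch to a cleaner argument. Write $p_{k_*}\ge \int$ of the Duhamel term, using that mass not yet at $k_*$ survives off $k_*$, so inflow to $k_*$ is at least $\underline q_*(1-p_{k_*})$, while the outflow factor is $\ge e^{-\overline q\cdot}$. Then bound the integral from below by $\alpha_*$ using $\tau_*\underline q_*=1$. Measurability of $\vartheta$ is harmless because all inequalities hold a.e.\ and the solutions are absolutely continuous (Carathéodory).

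\textbf{Step 2: one-block contraction on $\cZ$.} For a column-stochastic $P$ whose columns all dominate $\alpha_*e_{k_*}$, write $P=\alpha_* e_{k_*}\mathbf 1^\top+(1-\alpha_*)\tilde P$ with $\tilde P$ column-stochastic. For $\xi\in\cZ$ the first term vanishes because $\mathbf 1^\top\xi=0$. Hence $|P\xi|_1\le(1-\alpha_*)|\xi|_1$, and $P\xi\in\cZ$ by mass conservation. When $N\ge2$ we have $\alpha_*<1$: the first-step bound gives $\alpha_*\le 1$, and $\overline q\ge\underline q_*>0$ gives strictness. The operator $U_\vartheta^\delta(t,s)$ is the restriction of $P_\vartheta^\delta(t,s)$ to $\cZ$. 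Moreover $|P\xi|_1\le|\xi|_1$ for any column-stochastic $P$, so every partial block is nonexpansive.

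\textbf{Step 3: chaining.} For $t-s=n\delta\tau_*+r$ with $0\le r<\delta\tau_*$, the cocycle property together with Step 2 applied to the shifted path gives $|U(t,s)\xi|_1\le(1-\alpha_*)^n|\xi|_1$. Since $n>(t-s)/(\delta\tau_*)-1$, this is at most
\[
(1-\alpha_*)^{-1}\exp\!\Bigl(-\tfrac{t-s}{\delta}\,\underline q_*\log\tfrac1{1-\alpha_*}\Bigr),
\]
which gives exactly $C_{\mathrm{path}}$ and $\gamma_{\mathrm{path}}$. When $N=1$, $\cZ=\{0\}$ and the claim is vacuous.
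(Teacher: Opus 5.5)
Your overall architecture matches the paper's proof. The case $j=k_*$ of Step~1, the splitting $P=\alpha_*e_{k_*}\mathbf{1}^\top+(1-\alpha_*)\tilde P$ in Step~2, and the block chaining with $n=\lfloor (t-s)/(\delta\tau_*)\rfloor$ in Step~3 are all as in the paper. The gap is the case $j\neq k_*$ of Step~1, which you leave open.

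The inequality you reduce it to is not true as stated. With $x=\overline q/\underline q_*$, multiplying by $(1+x)e^x$ shows that $(1-e^{-1-x})/(1+x)\ge e^{-x}$ is equivalent to $e^x-e^{-1}\ge 1+x$. This fails for $x$ below about $0.75$; for instance, at $x=0$ the left side of the original inequality is $1-e^{-1}<1$. It does hold for every $x\ge 1$: at $x=1$ it reads $e-e^{-1}\approx 2.35\ge 2$, and $e^x-1-x$ is increasing on $[0,\infty)$.

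The observation you are missing is that $x\ge 1$ holds automatically in this case. Indeed, $j\neq k_*$ forces $N\ge 2$, and then $\overline q\ge\sum_{l\neq j}q_{jl}(\theta)\ge q_{jk_*}(\theta)\ge\underline q_*$. You already use this same bound in Step~2 to get $\alpha_*<1$. With that remark your primary route closes. It even gives a better minorization constant, of order $(1+x)^{-1}$ rather than $e^{-x}$.

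Your fallback, as written, does not close on its own. The Duhamel bound
\[
p_{k_*}(s+\delta\tau_*)\ \ge\ \int_s^{s+\delta\tau_*}\delta^{-1}e^{-\overline q(s+\delta\tau_*-r)/\delta}\,\underline q_*\bigl(1-p_{k_*}(r)\bigr)\,dr
\]
still needs a lower bound on $1-p_{k_*}(r)$, and you do not supply one. The paper supplies it by following the mass of the starting state. Since $\delta\dot p_j\ge q_{jj}p_j\ge-\overline q\,p_j$, one gets $1-p_{k_*}(r)\ge p_j(r)\ge e^{-\overline q(r-s)/\delta}$. The integrand is then at least $\delta^{-1}\underline q_*e^{-\overline q\tau_*}$, so the integral is at least $\underline q_*\tau_*e^{-\overline q\tau_*}=\alpha_*$, with no condition on $x$. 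This is why the paper's constant comes out cleanly. Your comparison ODE with rate $\underline q_*+\overline q$ instead needs the extra elementary inequality together with the observation that $x\ge 1$.
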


\begin{proof}
Because $\Theta$ is compact and $\theta \mapsto Q_\theta$ is continuous,
$\overline q < \infty$. Fix a measurable path $\vartheta$, a scale $\delta>0$,
a time $s \geq 0$, and a state $i \in S$. Let $p^{(i)}(\cdot)$ solve
\[
\delta \dot p^{(i)}(t) = Q_{\vartheta(t)}^* p^{(i)}(t),
\qquad
p^{(i)}(s) = e_i.
\]
The same nonnegativity and total-mass argument as in Theorem
\ref{thm:attractor} gives $p^{(i)}(t) \in \Delta_S$ for all $t \geq s$.

If $i = k_*$, then
\[
\delta \frac{d}{dt} p_{k_*}^{(k_*)}(t)
\geq
- \overline q\, p_{k_*}^{(k_*)}(t),
\]
so Gronwall gives
\[
p_{k_*}^{(k_*)}(s+\delta\tau_*)
\geq
e^{-\overline q\tau_*}
=
\alpha_*.
\]
If $i \neq k_*$, then first
\[
\delta \frac{d}{dt} p_i^{(i)}(t)
\geq
- \overline q\, p_i^{(i)}(t),
\]
hence
\[
p_i^{(i)}(t)
\geq
e^{-\overline q (t-s)/\delta}
\qquad
\text{for all } t \geq s.
\]
Using $q_{ik_*}(\vartheta(t)) \geq \underline q_*$ and the nonnegativity of the
other components,
\[
\delta \frac{d}{dt} p_{k_*}^{(i)}(t)
\geq
\underline q_* e^{-\overline q (t-s)/\delta}
- \overline q\, p_{k_*}^{(i)}(t).
\]
Multiplication by the integrating factor
$e^{\overline q (t-s)/\delta}$ and integration from $s$ to
$s+\delta\tau_*$ yield
\[
p_{k_*}^{(i)}(s+\delta\tau_*)
\geq
e^{-\overline q/\underline q_*}
=
\alpha_*.
\]
Thus each column of $P_\vartheta^\delta(s+\delta\tau_*,s)$ dominates
$\alpha_* e_{k_*}$.

If $N=1$, then $\cZ=\{0\}$, so
$U_\vartheta^\delta(t,s)\xi = 0$ for every $\xi \in \cZ$ and Assumption
\ref{ass:path} is immediate, for example with
$C_{\mathrm{path}}=1$ and $\gamma_{\mathrm{path}}=1$.
Assume now that $N \geq 2$ and choose some $i_* \in S$ with $i_* \neq k_*$.
Then for every $\theta \in \Theta$,
\[
\overline q
\geq
\sum_{j \neq i_*} q_{i_*j}(\theta)
\geq
q_{i_*k_*}(\theta)
\geq
\underline q_*,
\]
hence
\[
\alpha_* = e^{-\overline q/\underline q_*} \leq e^{-1} < 1.
\]

Set $M_* := e_{k_*}\mathbf{1}^\top$ and
\[
R_{\vartheta,s}^\delta
:=
\frac{P_\vartheta^\delta(s+\delta\tau_*,s) - \alpha_* M_*}{1-\alpha_*}.
\]
The previous lower bound shows that $R_{\vartheta,s}^\delta$ has nonnegative
entries, and column sums are preserved because both
$P_\vartheta^\delta(s+\delta\tau_*,s)$ and $M_*$ are column-stochastic. Hence
$R_{\vartheta,s}^\delta$ is column-stochastic. If $\xi \in \cZ$ and
$\xi = \xi^+ - \xi^-$ is the usual positive-negative decomposition, then
$|\xi^+|_1 = |\xi^-|_1 = |\xi|_1/2$. For any column-stochastic matrix $R$,
\[
|R\xi|_1
\leq
|R\xi^+|_1 + |R\xi^-|_1
=
|\xi^+|_1 + |\xi^-|_1
=
|\xi|_1.
\]
Since $M_*\xi = 0$ on $\cZ$, this gives the block contraction
\[
\left|
U_\vartheta^\delta(s+\delta\tau_*,s)\xi
\right|_1
\leq
(1-\alpha_*) |\xi|_1.
\]
The same simplex argument implies that every propagator
$P_\vartheta^\delta(t,s)$ is column-stochastic, hence nonexpansive on $\cZ$ in
$|\cdot|_1$. For arbitrary $0 \leq s \leq t$, let
$n := \lfloor (t-s)/(\delta\tau_*) \rfloor$. Iterating the block contraction on
the $n$ full blocks and using nonexpansiveness on the remainder interval yield
\[
\left|
U_\vartheta^\delta(t,s)\xi
\right|_1
\leq
(1-\alpha_*)^n |\xi|_1
\leq
(1-\alpha_*)^{-1}
e^{-\gamma_{\mathrm{path}}(t-s)/\delta}
|\xi|_1.
\]
This is Assumption \ref{ass:path}.
\end{proof}

\section{A two-state example}

We illustrate Theorem \ref{thm:attractor} on the smallest system that exhibits
genuine three-way coupling among $\theta$, $w$, and $\mu$. The example is a
two-state, two-action MDP in which a learner chooses between two actions in
each of two states, with the transition rates between states depending on the
policy parameter. Take $N=K=2$,
$d=m=1$, and set
\[
R_\theta = \tau = \lambda_c = 1,
\qquad
\delta = 2.
\]
The actor features are antisymmetric,
$\psi_{1,1}=\psi_{2,2}=+1$, $\psi_{1,2}=\psi_{2,1}=-1$;
the critic features are $\varphi_{1,1}=\varphi_{2,2}=1$,
$\varphi_{1,2}=\varphi_{2,1}=0$;
and the rewards are $r_{1,1}=1$, $r_{2,2}=\tfrac{1}{2}$,
$r_{1,2}=r_{2,1}=0$.
The reward asymmetry ($r_{1,1}\neq r_{2,2}$) ensures that the distribution
$\mu$ enters the actor drift; with equal rewards the actor and critic
decouple from~$\mu$.

For the controlled chain, define $Q_\theta$ by the rates
$q_{12}(\theta) = 1 - \theta/2$ and $q_{21}(\theta) = 1 + \theta/2$,
both positive on $\Theta=[-1,1]$. The spectral gap is
$q_{12}+q_{21}=2$ uniformly, so Assumption \ref{ass:mix} holds with
$C_{\mathrm{mix}}=1$ and $\gamma=2$. The invariant law is
$\mu_\theta = \bigl(\tfrac{1}{2}+\tfrac{\theta}{4},\;
\tfrac{1}{2}-\tfrac{\theta}{4}\bigr)$.
The minorization criterion from Proposition \ref{prop:minorization} also holds:
choosing $k_*=1$, one has
\[
q_{21}(\theta) = 1 + \theta/2 \geq \frac{1}{2}
\qquad
\text{for all } \theta \in [-1,1].
\]
Thus Assumption \ref{ass:path} is automatic in this example.

Writing
\[
p := \frac{1}{1+e^{-2\theta}},
\qquad
q := 1-p,
\]
the exact system \eqref{eq:main} reduces to three scalar equations:
\begin{align*}
\dot\theta &= (1-\theta^2)\,2pq\,
\bigl(\tfrac{1}{2}\mu_1 + \tfrac{1}{2} + w - 2\theta\bigr), \\
\dot w &= p\,\tfrac{\mu_1+1}{2} - (1+p)\,w, \\
\dot\mu_1 &= \bigl(\tfrac{1}{2}+\tfrac{\theta}{4}\bigr) - \mu_1.
\end{align*}
The first equation drives $\theta$ toward the policy-gradient direction, damped
at the boundary; the second is an exponentially stable linear filter for the
value estimate; and the third relaxes $\mu_1$ toward the stationary probability
of being in state~$1$ under the current policy.
The standing constants are $B_b=1$ and $R_w=2$, giving the absorbing set
$K = [-1,1]\times[-2,2]\times[0,1]$.

The reduced invariant-law system from the remark after Proposition
\ref{prop:invlaw} becomes
\begin{align*}
\dot{\bar\theta}
&=
(1-\bar\theta^2)\,2\bar p\bar q\,
\bigl(\tfrac{3}{4}+\tfrac{\bar\theta}{8}+\bar w - 2\bar\theta\bigr), \\
\dot{\bar w}
&=
\bar p\,\bigl(\tfrac{3}{4}+\tfrac{\bar\theta}{8}\bigr) - (1+\bar p)\,\bar w,
\end{align*}
where $\bar p = (1+e^{-2\bar\theta})^{-1}$ and $\bar q = 1-\bar p$.

Figure \ref{fig:attractor} shows three trajectories integrated from different
initial data, two of which start outside $K$. All trajectories enter $K$ and
converge to a unique interior equilibrium
$(\theta^*,w^*,\mu_1^*)\approx(0.59,\,0.36,\,0.65)$.
For the plotted value $\delta=2$, the exact attractor is numerically a single
point. In general, $\cA_\delta$ may contain multiple equilibria and connecting
orbits; the theorem guarantees only that it is compact, invariant, and attracts
all bounded sets.

\begin{figure}[ht]
\centering
\includegraphics[width=\textwidth]{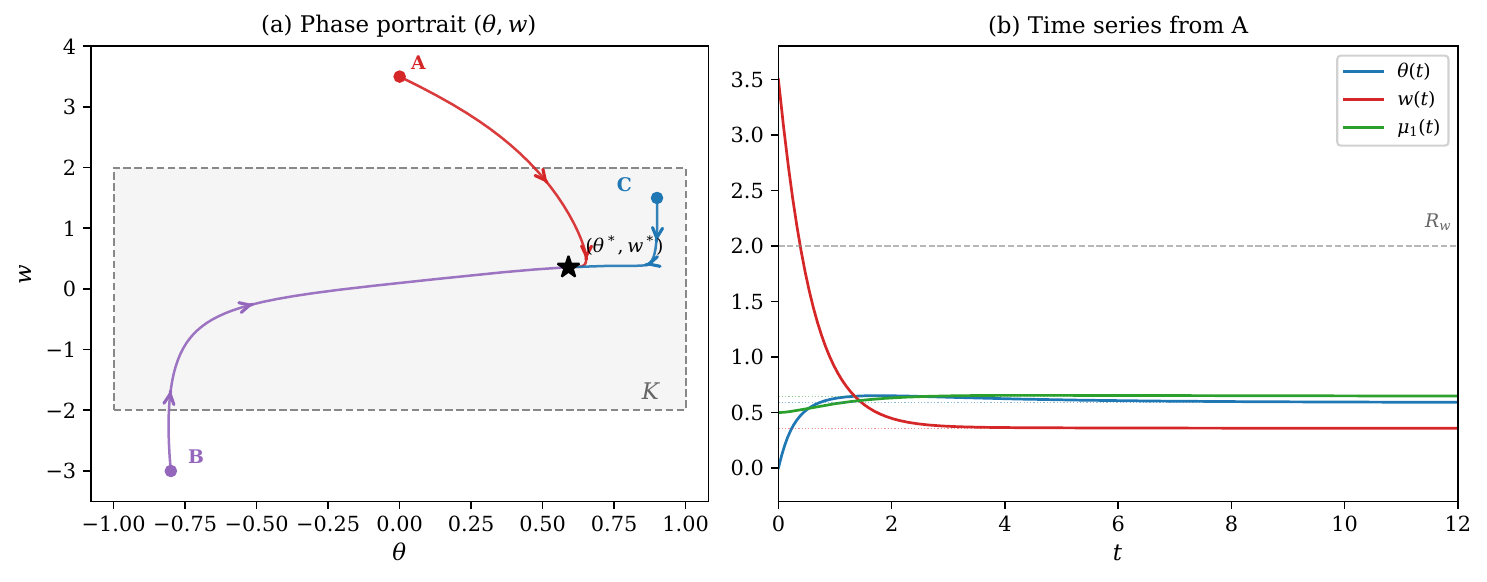}
\caption{Two-state example with asymmetric rewards.
(a)~Phase portrait projected onto $(\theta,w)$. The dashed rectangle is the
absorbing set~$K$; trajectories A and B start outside it. All three
trajectories converge to the interior equilibrium~(black star).
(b)~Time series of $\theta$, $w$, $\mu_1$ along trajectory~A.}
\label{fig:attractor}
\end{figure}

Because the equilibrium condition already imposes
$\mu_1 = \tfrac{1}{2}+\tfrac{\theta}{4}$, the equilibrium itself is independent
of $\delta$, so a direct attractor-convergence plot is not very informative in
this example. Figure \ref{fig:tracking} instead illustrates Theorem
\ref{thm:tracking}: for a fixed initial condition
$(\theta_0,w_0,\mu_{1,0})=(0.2,-1.4,0.05)\in K$, the state-law defect
$|\mu_1^\delta(t)-(\tfrac{1}{2}+\theta^\delta(t)/4)|$ exhibits the expected
initial layer, while the actor-critic discrepancy
$|\theta^\delta(t)-\bar\theta(t)|+|w^\delta(t)-\bar w(t)|$ is uniformly
$O(\delta)$ on the plotted interval.

\begin{figure}[ht]
\centering
\begin{tikzpicture}
\begin{groupplot}[
group style={group size=2 by 1, horizontal sep=1.6cm},
width=0.45\textwidth,
height=0.33\textwidth,
xlabel={$t$},
ylabel={error},
xmin=0,
xmax=8,
ymode=log,
grid=both,
major grid style={line width=0.2pt, draw=gray!35},
minor grid style={line width=0.1pt, draw=gray!20},
tick label style={font=\small},
label style={font=\small},
title style={font=\small},
legend style={font=\small, draw=none, fill=none}
]
\nextgroupplot[
title={(a) State-law defect}
]
\addplot[thick, color=red!75!black]
table[x=t, y=state_defect_1, col sep=comma]{tracking_data.csv};
\addplot[thick, color=orange!85!black]
table[x=t, y=state_defect_05, col sep=comma]{tracking_data.csv};
\addplot[thick, color=blue!75!black]
table[x=t, y=state_defect_02, col sep=comma]{tracking_data.csv};
\addplot[thick, color=green!55!black]
table[x=t, y=state_defect_01, col sep=comma]{tracking_data.csv};

\nextgroupplot[
title={(b) Actor-critic tracking error},
legend style={
font=\small,
draw=none,
fill=white,
fill opacity=0.85,
text opacity=1,
at={(0.03,0.05)},
anchor=south west
}
]
\addplot[thick, color=red!75!black]
table[x=t, y=tracking_error_1, col sep=comma]{tracking_data.csv};
\addlegendentry{$\delta=1$}
\addplot[thick, color=orange!85!black]
table[x=t, y=tracking_error_05, col sep=comma]{tracking_data.csv};
\addlegendentry{$\delta=0.5$}
\addplot[thick, color=blue!75!black]
table[x=t, y=tracking_error_02, col sep=comma]{tracking_data.csv};
\addlegendentry{$\delta=0.2$}
\addplot[thick, color=green!55!black]
table[x=t, y=tracking_error_01, col sep=comma]{tracking_data.csv};
\addlegendentry{$\delta=0.1$}
\end{groupplot}
\end{tikzpicture}
\caption{Finite-time reduction in the two-state example.
(a)~State-law defect
$|\mu_1^\delta(t)-(\tfrac{1}{2}+\theta^\delta(t)/4)|$
for $\delta \in \{1,0.5,0.2,0.1\}$.
(b)~Actor-critic tracking error
$|\theta^\delta(t)-\bar\theta(t)|+|w^\delta(t)-\bar w(t)|$
for the same initial $(\theta_0,w_0)$ and the same values of $\delta$. Both
panels use logarithmic vertical scale. Across both panels, red denotes
$\delta=1$, orange $\delta=0.5$, blue $\delta=0.2$, and green $\delta=0.1$.}
\label{fig:tracking}
\end{figure}
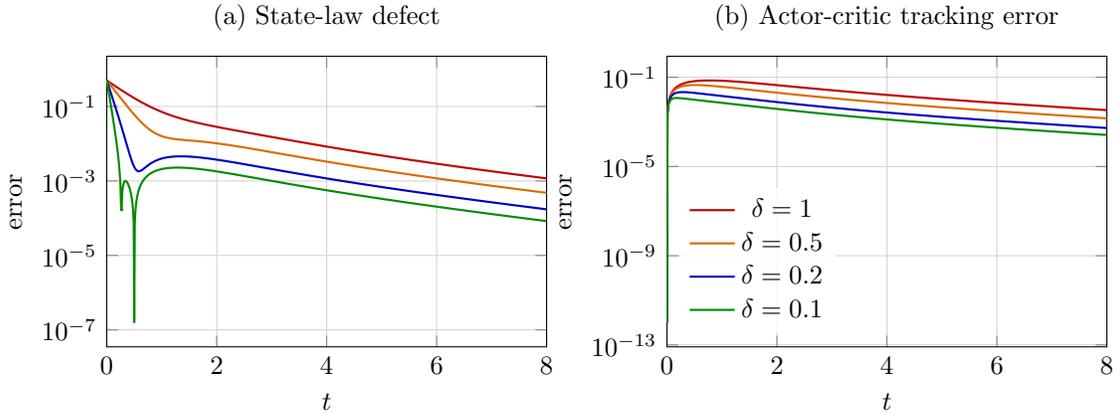

\clearpage

\section{Discussion}

The main message is that feedback between a learning algorithm and the
population it acts on can be analyzed as a dissipative dynamical system with a
compact attractor.  The attractor may contain multiple equilibria or connecting
orbits---it is a set whose internal structure depends on the model
parameters---and its existence, together with the upper-semicontinuity result,
gives a complete asymptotic picture: the long-run behavior of the full coupled
system, and its relationship to the simpler invariant-law reduction, are both
captured by compact invariant objects that deform continuously as
$\delta \to 0$.  In particular, upper semicontinuity is a one-sided guarantee:
as the population mixing time shrinks relative to the policy timescale, the
exact system cannot develop asymptotic behaviors---limit cycles, additional
equilibria, or other recurrent structures---that are absent from the reduced
invariant-law system.

Uniform exponential mixing enters first in the frozen invariant-law theorem,
which identifies the stationary occupancy and the reduced system.  To pass from
that frozen statement to an exact fast-slow comparison, one must control the
non-autonomous evolution family generated by $Q_{\theta(t)}^*$.  That extra
step is encoded here as Assumption~\ref{ass:path}, and
Proposition~\ref{prop:minorization} gives one explicit finite-state criterion
under which it holds.  In the Lean formalization, Assumption~\ref{ass:path} had
to be decomposed into three components---existence of a measurable evolution
family, a continuous-path stability core, and an agreement condition linking
the two---because the non-autonomous fast equation requires both measurability
(for the variation-of-constants integral) and continuity (for the Gronwall
estimates) in ways that the paper's single assumption leaves implicit.

A natural extension is the two-parameter limit
$(\varepsilon,\delta)\to(0,0)$ in which the critic equation
$\varepsilon\dot w = b - Aw$ is also fast; that requires an additional pathwise
stability estimate for the critic and reduces the slow dynamics to a single
actor equation on $\Theta$.  The present model is finite-state, boxed, and
deterministic at the mean-dynamics level; genuine external time dependence
(uniform or trajectory attractors) and stochastic perturbations bridging back
to the discrete-time iterates are logically separate extensions.  In recommendation and search systems, users fall into finitely many segments
and items into finitely many categories, so the finite-state model applies
directly.  The attractor then describes the long-run population shift caused by
the deployed policy.

\subsection*{Acknowledgments}
The Lean~4 formalization accompanying this paper was developed with assistance
from Claude (Anthropic).

\end{document}